\documentclass{amsart}
\usepackage{amsfonts,amssymb,amscd,amsmath,enumerate,verbatim,calc}

\newcommand{\CM}{Cohen-Macaulay}

\newcommand{\bx}{\mathbf{x}}

\newcommand{\n}{\mathfrak{n} }
\newcommand{\m}{\mathfrak{m} }

\newcommand{\Z}{\mathbb{Z} }
\newcommand{\Xb}{\mathbf{X}^\bullet}

\newcommand{\Yb}{\mathbf{Y}^\bullet}

\newcommand{\Pb}{\mathbf{P}^\bullet}

\newcommand{\rt}{\rightarrow}

\newcommand{\ov}{\overline}

\newcommand{\codim}{\operatorname{codim}}
\newcommand{\embdim}{\operatorname{embbdim}}

\newcommand{\depth}{\operatorname{depth}}
\newcommand{\projdim}{\operatorname{projdim}}

\newcommand{\proj}{\operatorname{proj}}

\newcommand{\type}{\operatorname{type}}

\newcommand{\Tor}{\operatorname{Tor}}
\newcommand{\rank}{\operatorname{rank}}

\newcommand{\gr}{\operatorname{gr}}
\newcommand{\cx}{\operatorname{cx}}
\newcommand{\curv}{\operatorname{curv}}

\newcommand{\Hom}{\operatorname{Hom}}

\theoremstyle{plain}

\newtheorem{theorem}{Theorem}[section]

\newtheorem{lemma}[theorem]{Lemma}
\newtheorem{proposition}[theorem]{Proposition}

\theoremstyle{definition}

\newtheorem{s}[theorem]{}
\newtheorem{remark}[theorem]{Remark}
\newtheorem{example}[theorem]{Example}

\theoremstyle{remark}
\begin{document}

\title[Non-extremal]{Bounds on multiplicity of MCM modules having non-extremal growth of betti-numbers}
\author{Tony~J.~Puthenpurakal}
\date{\today}
\address{Department of Mathematics, IIT Bombay, Powai, Mumbai 400 076, India}

\email{tputhen@gmail.com}
\subjclass{Primary 13D02; Secondary 13D40, 13A30}
\keywords{ Growth of resolutions, curvature, Cohen-Macaulay rings, multiplicity, associated graded rings }

 \begin{abstract}
 Let $(A,\mathfrak{m})$ be a Cohen-Macaulay local ring of dimension $d$ and residue field $k$.
  Let $M$ be a maximal \CM \ $A$-module. Let  $e(M)$ be the multiplicity of $M$ and let $\mu(M)$ denote the number of its minimal generators.
\begin{enumerate}
  \item Assume $A$ is not a complete intersection. If $\text{curv}(M) < \text{curv}(k)$ then we prove that under mild conditions, $e(M) \geq \mu(M)(1 + \curv(k))$.
  \item Assume $A$ is a complete intersection. If $\text{cx}(M) < \text{cx}(k)$ then we prove that $e(M) \geq 2\mu(M)$.
\end{enumerate}
In both cases we give examples which shows our results are sharp.
\end{abstract}
 \maketitle
\section{introduction}
In this paper all rings considered are Noetherian and local. Furthermore all modules considered will be finitely generated.
Let $(A,\m)$ be a  local ring with residue field $k$ and let $E$ be  an $A$-module. By $\ell(-)$ we denote the length function on $A$-modules. Let $\beta_n^A(E) = \ell(\Tor^A_n(E,k))$ for $n \geq 0$ be the $n^{th}$-betti number of $E$.
We set the \emph{complexity} of $E$ to be
\[
\cx_A(E) = \min\{ m \in \mathbb{N} \mid \limsup_{n \rt \infty} \beta_n(E)/n^{m-1} < \infty \}.
\]
It is possible that $\cx_A(E)$ is infinite. However if $A$ is a complete intersection of codimension $c$ then $\cx_A(E) \leq \cx_A(k) = c$.
We set the \emph{curvature} of $E$ to be
\[
\curv_A(E) = \limsup_{n \rt \infty}\sqrt[n]{\beta_n^A(E)}.
\]
We drop the subscript $A$ if it is clear from the context.
It can be shown that $\curv(E) \leq \curv(k) $, see  \cite[4.1.9]{A} and $\curv(k) < \infty$; see \cite[4.1.5]{A}.  It can also be shown that $A$ is \emph{not} a complete intersection if and only if $\curv(k) > 1$; see \cite[8.2.1]{A}.
An $A$-module $E$ is said to be \emph{extremal} if $\curv(E) = \curv(k)$ and $\cx(E) = \cx(k)$.

If $\dim E = r$ then $e(E) = \lim_{n \rt \infty}r!\ell(E/\m^n E)/n^r$ is called the multiplicity of $E$. We also let $\mu(E) = \ell(E/\m E)$ denote the number of minimal generators of $E$.
It is not difficult to prove that if $E$ is \CM \ then $e(E) \geq \mu(E)$.

We consider a maximal \CM \ (= MCM) $A$-module $M$. We are interested in the fraction $\xi(M) = e(M)/\mu(M) \geq 1$. Recall an MCM $A$-module $U$ is said to be Ulrich if $e(U) = \mu(U)$. It is not difficult to prove Ulrich modules are extremal. We prove:
\begin{theorem}
\label{main}
Let $(A,\m)$ be a Cohen-Macaulay local ring of dimension $d$ and residue field $k$. Assume $A$ is not a complete intersection.
Assume \\ $\lim_{n \rt \infty}\beta_{n+1}(k)/\beta_n(k)$ exists (and so equal to $\curv(k)$.
  Let $M$ be an MCM $A$-module with $\curv(M) < \curv(k)$. Then $e(M) \geq \mu(M)(1 + \curv(k))$.
\end{theorem}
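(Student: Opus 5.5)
We reduce to dimension zero and then compare two consecutive terms of a minimal resolution of $M$, using that the Betti numbers of $M$ grow strictly slower than those of $k$.

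\emph{Reduction to dimension zero.} Pass to $A[X]_{\m A[X]}$ so that the residue field is infinite; this changes none of $e(M)$, $\mu(M)$, the Betti numbers of $M$ or of $k$, or the hypothesis on $\beta_{n+1}(k)/\beta_n(k)$. Then choose a maximal sequence $\bx = x_1,\dots,x_d$ that is superficial for $M$ and for $A$, hence $A$- and $M$-regular. Put $B = A/(\bx)$ and $N = M/\bx M$. We have $e(M) = \ell(N)$, $\mu(N) = \mu(M)$, and $\beta_n^B(N) = \beta_n^A(M)$. For $k$, the relation $P^A_k(t)(1+t)^{-d}\cdot(\text{correction}) = P^B_k(t)$ holds. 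If $\bx$ is chosen with $x_i \notin \m^2$ modulo the earlier elements, which is possible after a general choice when the embedding is suitable, then $P^B_k = P^A_k (1+t)^d$ and the curvatures agree. The ratio limit also survives, because multiplying by $(1+t)$ preserves convergence of consecutive ratios when the limit is $>1$. In general, with $\bx$ only superficial, I would instead use the standard inequality $P_k^B(t) \preceq P_k^A(t)(1+t)^d$ and handle it via curvature. This is the step I expect to be fiddly, and the ``mild conditions'' live here. So we may assume $\dim A = 0$, $M = N$, $e(M) = \ell(M)$, and $\curv(M) < \curv(k) =: \rho > 1$.

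\emph{Dimension zero.} Write $\mu = \mu(M)$ and let $0 \to \Omega \to A^{\mu} \to M \to 0$ with $\Omega = \operatorname{syz}_1(M) \subseteq \m A^{\mu}$. Then
\[
\ell(M) = \mu\,\ell(A) - \ell(\Omega).
\]
This alone is not directly useful, so I would instead filter $M$ by $\m^iM$. The socle and the top layer give $\ell(M) \geq \mu + \ell(\m M)$, so it suffices to show $\ell(\m M) \geq \rho\,\mu$. The key idea is to compare Betti numbers through the exact sequence $0 \to \m M \to M \to k^{\mu} \to 0$. This gives the long exact Tor sequence
\[
\cdots \to \Tor_{n+1}(M,k) \to \Tor_{n+1}(k,k)^{\mu} \to \Tor_n(\m M,k) \to \Tor_n(M,k) \to \cdots,
\]
which yields
\[
\beta_n(\m M) \geq \mu\,\beta_{n+1}(k) - \beta_{n+1}(M).
\]
Since $\m M$ has finite length, it admits a filtration with $\ell(\m M)$ copies of $k$, so $\beta_n(\m M) \leq \ell(\m M)\,\beta_n(k)$. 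Dividing by $\beta_n(k)$ gives
\[
\ell(\m M) \geq \mu\,\frac{\beta_{n+1}(k)}{\beta_n(k)} - \frac{\beta_{n+1}(M)}{\beta_n(k)}.
\]
Now let $n \to \infty$. The first term tends to $\mu\rho$ by hypothesis. For the second term, choose $\curv(M) < \sigma < \rho$. Then $\beta_{n+1}(M) \le C\sigma^{n}$, while $\beta_n(k) \geq c\,\rho'^{\,n}$ for every $\rho' < \rho$, since the ratio limit exists. So the second term tends to $0$. Hence $\ell(\m M) \geq \mu\rho$, and $e(M) = \ell(M) \geq \mu(1+\rho)$.

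The main obstacle is the reduction step: we must ensure that after going modulo a regular sequence the curvature of $k$ and the existence of the ratio limit are preserved. I would first try to choose $\bx$ so that $x_i$ is superficial and $\m$ is minimally generated compatibly, using the fact that for $x \in \m \setminus \m^2$ that is $A$-regular, $P^{A/x}_k = P^A_k/(1+t)$ (Nagata). If only $x \in \m^2$ is available, we have $e$-type issues, and I would verify the ratio property directly from $P^{A/x}_k$ via the Tate/Gulliksen change of rings formulas.
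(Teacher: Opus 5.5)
Your Artinian argument is correct and is genuinely simpler than the paper's. From $0\to\m N\to N\to k^{\mu}\to0$ and $\beta_n(\m N)\le\ell(\m N)\beta_n(k)$ you get $\mu\beta_{n+1}(k)\le\beta_{n+1}(N)+\ell(\m N)\beta_n(k)$, and the limit gives $\ell(\m N)\ge\mu\rho$. This rules out $\m N=0$ automatically, whereas the paper needs \cite[2.3]{P-ext} for that. It also makes the paper's detour through $A[X]_{(\m,X)}$ for $\dim A=0$ unnecessary.

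\textbf{The gap.} The problem is the reduction from $d>0$, which you leave open and which, as written, rests on a wrong formula. Nagata's theorem \cite[3.3.5]{A} gives $P^A_k(t)=(1+t)^dP^B_k(t)$, not $P^B_k=P^A_k(1+t)^d$. So the $\beta^B_n(k)$ are iterated alternating sums of the $\beta^A_j(k)$, and ``multiplying by $(1+t)$ preserves ratio limits'' is the irrelevant direction. Your Artinian step, run over $B$, needs $\beta^B_{n+1}(k)/\beta^B_n(k)\to\rho$, and hence $\beta^B_n(k)\ge(\rho')^{n}$ eventually for every $\rho'<\rho$. That amounts to the claim that dividing by $1+t$ preserves a ratio limit $\rho>1$. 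The claim is true, but it is exactly the analytic input the paper isolates in Lemma \ref{lim}; for $d=1$, the quantity $\chi_{-n}(\ov{\Xb})$ in Lemma \ref{rachel}(c) is precisely $\beta^B_n(k)$. Your fallback of a coefficientwise inequality plus curvature would not supply it, since it gives neither the ratio limit nor a lower bound on $\beta^B_n(k)$.

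\textbf{How to close it.} Your worry about $x_i\in\m^2$ is unfounded. If $x\in\m^2$ were superficial for $E$ with $\dim E>0$, then $\m^{n-1}E\subseteq(\m^{n+1}E\colon x)\cap\m^cE=\m^nE$ for $n\gg0$, forcing $\m^{n-1}E=0$. So $x_i\notin\m^2+(x_1,\dots,x_{i-1})$, Nagata applies at each step, and this also gives $\curv_B(k)=\curv_A(k)$. You then have two options.

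\emph{Prove the transfer.} For one element, $u_n=\beta^B_n(k)/\beta^A_n(k)\in[0,1]$ satisfies $u_n=1-u_{n-1}\beta^A_{n-1}(k)/\beta^A_n(k)$. This is a contraction converging to $\rho/(1+\rho)$, whence $\beta^B_{n+1}(k)/\beta^B_n(k)\to\rho$.

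\emph{Bypass it.} Your inequalities say that $\sum_{n\ge0}\bigl(\beta^B_{n+1}(N)+\ell(\m N)\beta^B_n(k)-\mu\beta^B_{n+1}(k)\bigr)t^n$ has nonnegative coefficients. Multiplying by $(1+t)^d$ keeps them nonnegative and gives
\[
\mu\,\beta^A_{n+1}(k)\le\sum_{i=0}^{d}\binom{d}{i}\beta^A_{n+1-i}(M)+\ell(\m N)\,\beta^A_n(k)\qquad(n\ge d).
\]
Your limit argument applies to this directly, using only the hypothesis over $A$.

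With either fix, your route is a valid and noticeably shorter alternative to the paper's. The paper avoids Betti numbers over $B$ by working with $\Xb=\Pb\otimes_AB$, whose ranks are $\beta^A_n(k)$, at the price of tracking its Koszul homology through $\chi_m$.
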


We give an example which shows our result is sharp, see \ref{bella}.
\begin{remark}
If $E$ is an $A$-module with $\projdim_A E = \infty$ then in general  we have
$$\limsup_{n \rt \infty} \frac{\beta_{n+1}(E)}{\beta_n(E)} \geq \limsup_{n \rt \infty}\sqrt[n]{\beta_n(E)} \geq  \liminf_{n \rt \infty}\sqrt[n]{\beta_n(E)} \geq \liminf_{n \rt \infty} \frac{\beta_{n+1}(E)}{\beta_n(E)};$$
see \cite[3.37]{RW}.
In all known cases  the limit
$$ \lim_{n \rt \infty} \frac{\beta_{n+1}(E)}{\beta_n(E)} \quad \text{exists and so equal to} \curv(E).$$
 See \cite[4.3.6]{A} for a discussion on this limit.
\end{remark}

\begin{remark}
To the best of our knowledge Theorem \ref{main} is the first result relating multiplicity of a non-extremal MCM $A$-module with $\curv(k)$.
\end{remark}

Next we consider the case when $A$ is a complete intersection.
We prove
\begin{theorem}
\label{main-ci}
Let $(A,\m)$ be a  local complete intersection  of dimension $d$ and residue field $k$.
  Let $M$ be an MCM $A$-module with $\cx(M) < \cx(k)$. Then $e(M) \geq 2\mu(M)$.
\end{theorem}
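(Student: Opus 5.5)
We reduce to dimension zero and then compare two ways of counting the first syzygy. Since $M$ is MCM, we may pass to $A[X]_{\m[X]}$ to assume $k$ is infinite. Then we choose a maximal $A\oplus M$-superficial sequence $\bx = x_1,\ldots,x_d$, which is also regular on $A$ and on $M$. Put $B = A/(\bx)$ and $N = M/\bx M$. The invariants we need survive this reduction. First, $e(M) = \ell(N)$ and $\mu(M) = \mu(N)$. Second, $\beta_n^A(M) = \beta_n^B(N)$, since $\bx$ is $A$- and $M$-regular. Third, $B$ is an artinian complete intersection of the same codimension $c$, and $\cx_B(N) = \cx_A(M) < c = \cx_B(k)$ (complexity of $k$ over a complete intersection is its codimension). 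So it suffices to prove: if $(B,\n)$ is an artinian complete intersection and $N$ is a $B$-module with $\cx_B(N) < \cx_B(k)$, then $\ell(N) \geq 2\mu(N)$.

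In the artinian case we argue by contradiction on the size of $\n N$. Suppose $\ell(N) < 2\mu(N)$, i.e. $\ell(\n N) < \mu(N)$. Take the minimal presentation $0 \rt \Omega N \rt B^{\mu} \rt N \rt 0$ with $\mu = \mu(N)$. From it, $\ell(\Omega N) = \mu\,\ell(B) - \ell(N)$. On the other hand $\Omega N \subseteq \n B^\mu$, and $\Omega N \supseteq \operatorname{soc}(B)^{\mu}$ as long as $N$ has no free summand. We may assume this: free summands do not change complexity unless $N$ is free, and a free $N$ has $\ell(N) = \ell(B)\mu \geq 2\mu$ when $B$ is not a field; if $B$ is a field then $c=0$ and the hypothesis is vacuous.

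The key step, and the main obstacle, is to show that a small $\n N$ forces $N$ to look like a direct sum of copies of $k$ up to a bounded perturbation. This would give $\cx_B(N) = \cx_B(k)$, contradicting the hypothesis. My plan is to use the support variety / Eisenbud operator theory. For a complete intersection, $\cx(N) < c$ iff the support variety $V(N) \subsetneq \bar k^c = V(k)$. When $V(N)$ is proper, there is a generic choice of quotient $B = Q/(f)$ with $Q$ a complete intersection of codimension $c-1$ such that $N$ has finite-type behaviour over $Q$, i.e. $\cx_Q(N) = \cx_B(N)$ (Avramov--Buchweitz). The intention is then to use the change-of-rings exact sequence. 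Since $f \in \n_Q^2$, it forces a relation on Betti numbers of the form $\beta^B_n(N) = \beta^B_{n-2}(N) + (\text{bounded growth over } Q)$, so $N$ must have a periodic tail over $B$ relative to $Q$.

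Concretely, I would then pass to the length inequality as follows. If $\ell(\n N) < \mu(N)$, then some element of $N/\n N$ lifts to a copy of $k$ as a direct summand of $N$, or at least a socle-minimal-generator element $y \in N$ with $\n y = 0$. Here I would try a counting argument: the map $N/\n N \to$ (something of length $\ell(\n N)$) given by multiplication by a generic element of $\n$ has a kernel. Such a $y$ yields $k$ as a direct summand of $N$, using that $y \notin \n N$ and $\n y=0$ forces $By \cong k$ to split off via the Koszul-type argument; this is the delicate point. Once $k$ is a direct summand of $N$, we get $\cx(N) \geq \cx(k)$, the desired contradiction. Thus the heart is proving the splitting: a minimal generator killed by $\n$ gives a summand $k$. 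If that fails in general, I would fall back on using a generic element of $\n/\n^2$ together with the Eisenbud operators to bound $\ell(N)$ below by $2\mu(N)$ via $\ell(N) \geq \ell(N/xN) + \ell(xN)$ and $\mu(xN)$ estimates.
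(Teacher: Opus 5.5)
Your reduction to an artinian complete intersection $B$ of codimension $c$ is the same as the paper's. It gives $\ell(N)=e(M)$, $\mu(N)=\mu(M)$ and $\cx_B N<c$. The step you call delicate is actually easy. If $y\notin\n N$ and $\n y=0$, the composite $N\to N/\n N\to k$ (sending $\bar y\mapsto 1$) is a retraction, so $k$ splits off and $\cx_B N=c$.

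\textbf{The gap is the step that produces $y$.} The implication ``$\ell(\n N)<\mu(N)\Rightarrow$ there is $y\notin\n N$ with $\n y=0$'' is false. Take $B=k[X,Y]/(X^2,Y^2)$ and $N=\n$. Then $\mu(N)=2$ and $\ell(\n N)=\ell(\n^2)=1<2$. However, $\operatorname{soc}(N)=k\,XY=\n N$, so no minimal generator is killed by $\n$, and $k$ is not a summand of $N$. This does not contradict the theorem, since $\cx_B(\n)=2=c$.

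Your count with one generic $\lambda\in\n$ only gives $(0:_N\lambda)\not\subseteq\n N$, which is strictly weaker. In the example, $aX-bY$ is killed by $\lambda=aX+bY$ but not by $\n$. So a small $\n N$ does not force a summand $k$. The complexity hypothesis has to enter quantitatively, not through a splitting.

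Your support-variety paragraph and the fallback via $\ell(N)=\ell(N/xN)+\ell(xN)$ are never tied to a length estimate. As written, the proof has no working core.

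\textbf{What is missing is a bridge between lengths and Betti numbers.} The paper uses $\lambda(E)=\lim_n\beta_n^B(E)/\beta_n^B(k)$. This limit exists over a complete intersection and is $0$ exactly when $\cx_B E<c$ (Proposition~\ref{ovi}). Over a complete intersection one also has $\beta_{n+1}(k)/\beta_n(k)\to 1$. The argument then runs in three steps.
\begin{enumerate}
\item Subadditivity of $\beta_n$ along a composition series (or the filtration $\n^iN$) gives $\beta_n(\n N)\le\ell(\n N)\beta_n(k)$.
\item The sequence $0\to\n N\to N\to k^{\mu(N)}\to 0$ gives $\mu(N)\beta_{n+1}(k)\le\beta_{n+1}(N)+\beta_n(\n N)$.
\item Divide by $\beta_{n+1}(k)$ and let $n\to\infty$, using $\lambda(N)=0$ and $\beta_n(k)/\beta_{n+1}(k)\to 1$. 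This yields $\mu(N)\le\ell(\n N)=\ell(N)-\mu(N)$.
\end{enumerate}
This is exactly where $\cx_B N<c$ is used. The constant $2$ is $1+\lim\beta_{n+1}(k)/\beta_n(k)$, parallel to the bound $1+\curv(k)$ in Theorem~\ref{main}.
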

We now describe in brief the contents of this paper. In section two we discuss a few preliminaries that we need. In section three we describe a construction that we need. In section four we prove Theorem \ref{main} when $\dim A > 0$. In the next section we prove Theorem \ref{main} when $\dim A = 0$.  In section six we give a proof of Theorem \ref{main-ci}. In section seven we give two examples which shows that our results are sharp. In the appendix we compute a limit of a sequence which is crucial for us.
\section{Preliminaries}
In this section we discuss a few preliminaries that we need.
\s \emph{Associated graded rings, modules, Hilbert functions, superficial elements and multiplicity.}\\
Let $(A, \m)$ be local. Let $\gr A = \bigoplus_{n \geq 0}\m^n/\m^{n+1}$ be the \emph{associated graded ring} of $A$.  We note that $\gr A$ is a graded Noetherian $k = A/\m$-algebra.
If $a \in A$ is non-zero then $a \in \m^i \setminus \m^{i+1}$ for some $i$. Then $a^* = $ image of $a$ in $\m^i/\m^{i+1}$ (considered as a subset of $\gr A$) is called the initial form of $a$.
Let $M$ be an $A$-module. Let  $\gr M = \bigoplus_{n \geq 0}\m^nM/\m^{n+1}M$ be the \emph{associated grade module} of $M$. Note $\gr M$ is finitely generated as a $\gr A$-module.

\s The function $H(M, n) = \ell(\m^n M/\m^{n+1} M)$  for $n \geq 0$ is called the Hilbert function of $M$. We assemble it $H_M(z) = \sum_{n \geq 0}H(M, n)z^n$, the Hilbert series of $M$. It is well-known that
\[
H_M(z) = \frac{h_M(z)}{(1-z)^{\dim M}}, \quad \text{where} \ h_M(z) \in \Z[z] \ \text{and} \ h_M(1) \neq 0.
\]
We note  $h_M(1) = e(M)$ the multiplicity of $M$.

\s An element $x \in \m$ is said to be $M$-superficial if there exists $c $ and $n_0$ such that for all $n \geq n_0$ we have $(\m^{n+1}M \colon x)\cap \m^c M = \m^{n}M$.
Superficial elements exist if $k$ is infinite. A sequence $x_1, \ldots, x_s$ is called an $M$-superficial sequence if $x_i$ is $M/(x_1, \ldots, x_{i-1})$-superficial for $i = 1, \ldots, s$.

\s If $\depth M > 0$ then every $M$-superficial element $x$ is $M$-regular. Furthermore in this case we have $(\m^{n+1}M \colon x)  = \m^n M$ for $n \gg 0$.

\s \label{sup-reg} Let $x_1, \ldots, x_r$ be a $M$-superficial sequence. Then $\depth \gr M \geq r$ if and only if $x_1^*, \ldots, x_r^*$ is $\gr M$-regular, see \cite[Theorem 8]{P-th}.

\s\label{mult-sup} Suppose $\depth M \geq r$ and $x_1, \ldots, x_r$ is a $M$-superficial sequence. Set $N = M/(x_1, \ldots, x_r)M$. Then $e(N) = e(M)$, see \cite[Corollary 11]{P-th}.

We need the following elementary fact. We give a proof for the convenience of the reader.
\begin{proposition}
\label{calc} Let $\{ \theta_n \}_{n \geq i_0} $ be positive real numbers such that \\  $\lim_{n \rt \infty} \theta_{n+1}/\theta_{n} = \alpha > 1$. Let $\{r_n\}_{n \geq i_0}$ be a sequence of non-negative real numbers such that $\limsup_{n\rt \infty}\sqrt[n]{r_n} < \alpha$. Then
\[
\lim_{n \rt \infty} \frac{r_n}{\theta_n} = 0.
\]
\end{proposition}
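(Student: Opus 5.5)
The plan is to sandwich both sequences between geometric sequences with ratios separated by a gap. Set $\gamma = \limsup_{n\rt\infty}\sqrt[n]{r_n}$, so $0 \leq \gamma < \alpha$. Since $\alpha > 1$, I can fix real numbers $\beta, \beta'$ with $\max\{\gamma, 1\} \leq \gamma' < \beta' < \beta < \alpha$, where $\gamma'$ is any number with $\gamma \le \gamma' < \alpha$ (for instance $\gamma' = \max\{\gamma,1\}$ if that is $< \alpha$, which it is). Concretely, it suffices to choose $\beta_1, \beta_2$ with $\gamma < \beta_1 < \beta_2 < \alpha$.

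\emph{Step 1 (upper bound on $r_n$).} By the definition of $\limsup$, there is $n_1 \geq i_0$ such that $\sqrt[n]{r_n} < \beta_1$ for all $n \geq n_1$, i.e. $r_n < \beta_1^n$ for $n \geq n_1$. This uses only non-negativity of $r_n$ so that the $n$-th root makes sense.

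\emph{Step 2 (lower bound on $\theta_n$).} Since $\theta_{n+1}/\theta_n \rt \alpha > \beta_2$, there is $n_2 \geq i_0$ with $\theta_{n+1} \geq \beta_2 \theta_n$ for all $n \geq n_2$. Iterating, $\theta_n \geq \theta_{n_2}\beta_2^{\,n-n_2} = C\beta_2^n$ for $n \geq n_2$, where $C = \theta_{n_2}\beta_2^{-n_2} > 0$ because the $\theta_n$ are positive.

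\emph{Step 3 (conclusion).} For $n \geq \max\{n_1,n_2\}$ we get
\[
0 \leq \frac{r_n}{\theta_n} \leq \frac{1}{C}\left(\frac{\beta_1}{\beta_2}\right)^n,
\]
and since $0 < \beta_1/\beta_2 < 1$ the right side tends to $0$, giving the claim by squeezing. Note that positivity of $\beta_1$ is automatic if I choose $\beta_1 > \max\{\gamma,0\}$; the hypothesis $\alpha>1$ is not really needed for this argument beyond guaranteeing room to choose the constants, so I would not rely on it.

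There is no serious obstacle; the only point requiring care is Step 2, namely converting the ratio limit into a uniform geometric lower bound, which is handled by choosing $\beta_2$ strictly below $\alpha$ and iterating from a fixed index with positive $\theta_{n_2}$.
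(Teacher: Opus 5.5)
Your proof is correct and is essentially the paper's argument: both bound $r_n$ above and $\theta_n$ below by geometric sequences whose ratios are separated by a gap below $\alpha$. The differences are minor. You get $\theta_n \geq C\beta_2^{\,n}$ by iterating the ratio bound rather than citing $\lim_{n\rt\infty}\sqrt[n]{\theta_n}=\alpha$ from Rudin, and you finish with a direct squeeze rather than via the root test for $\sum r_n/\theta_n$ (and, as you note, $\alpha>1$ is not needed in either version).
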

\begin{proof}
Set $c = \limsup_{n\rt \infty}\sqrt[n]{r_n}$.
  By \cite[3.37]{RW} we have $\lim_{n \rt \infty}\sqrt[n]{\theta_n} = \alpha$. Let $\epsilon > 0$ be such that $\alpha - \epsilon  > c + \epsilon$. So for $n \gg 0$
  we have $\theta_n \geq (\alpha-\epsilon)^n$ and $r_n \leq (c+\epsilon)^n$. It follows that
  $$\limsup \sqrt[n]{r_n/\theta_n} \leq \frac{c + \epsilon}{\alpha - \epsilon} < 1. $$
  Therefore the series $\sum_{n \geq i_0} r_n/\theta_n$ is convergent. The result follows.
\end{proof}
\s \label{setup-C} Let $(A,\m)$ be a   local ring.
 Let $D^b(A)$ be the bounded derived category of $A$. We often identify $D^b(A)$ with $K^{-,b}(\proj A)$. We index complexes cohomologically.

\s Let $\Xb \in D^b(A)$. We assume $\Xb \in K^{b,-}(\proj A)$.  Let $$\beta_n(\Xb) = \ell(\Hom_{D^b(A)}(\Xb, k[n])).$$ Note if $\Xb$ is a minimal complex then $\beta_n(\Xb)$ is the rank of the free $A$-module ${\Xb}^{-n}$. Set $$\chi(\Xb)  = \sum_{n \in \Z}(-1)^i\ell(H^i(\Xb)).$$

\s Let $\Yb$ be a  bounded above complex of finitely generated $A$-modules. We do NOT assume $H^i(\Yb) = 0$ for $i \ll 0$. For $m \in \Z$ set
$$\chi_m(\Yb) = \sum_{j \geq 0}(-1)^j\ell(H^{m + j}(\Yb)). $$

\section{A construction }
In this section $(A,\m)$ is a Cohen-Macaulay ring of dimension $d \geq 1$ with infinite residue field. We assume that $A$ is not a complete intersection. So $\curv k > 1$. We also assume that
$$\lim_{n \rt \infty} \beta_{n+1}^A(k)/\beta_n^A(k)  = \curv(k).$$
Set $\alpha = \curv(k)  > 1$.

 \s \label{rc-body} Let $\Pb$ be the minimal projective resolution of $k$ as a $A$-module. Let $g \in \m$ be $A$-regular.  Let $\ov{\Pb} = \Pb\otimes_A A/(g)$.
 We have an exact sequence
 \[
 0 \rt \Pb \xrightarrow{g} \Pb \rt \ov{\Pb} \rt 0.
 \]
 It follows that $H^{n}(\ov{\Pb}) = 0$ for $n \ll 0$. Furthermore
   $$\chi(\ov{\Pb}) = \sum_{n \in \Z}(-1)^i\ell(H^i(\ov{\Pb})) = 0. $$
 Let $M $ be an MCM $A$-module with $\curv(M) < \curv(k)$.  Let $\bx = x_1, \ldots, x_{d}$ be maximal $M \oplus  A$-superficial sequence.
 Set $\Xb = \Pb \otimes_{A} A/(\bx)$. Then one can show similarly $H^{n}(\Xb) = 0$ for $n \ll 0$. Furthermore  $\chi(\Xb) = 0$.
 Set $B = A/(\bx)$. Note $\Xb \in K^{-,b}(\proj B)$ is a minimal complex and $\Hom_{D^b(B)}(\Xb, k[n]) = \beta_{n}$ where $\beta_n = \beta_n^A(k) $ since
 $\beta_n^A(k) = \rank_A {\Pb}^{-n} = \rank_B {\Xb}^{-n} $.  Set $D = M/\bx M$. By \ref{mult-sup} we get $\ell(D) = e(M)$. We also have $\curv_B(D) = \curv_A(M) < \alpha$.

\begin{lemma}\label{rachel}
(with setup as in \ref{rc-body})
 \begin{enumerate}[\rm (a)]
     \item
     $$ \lim_{n \rt \infty} \frac{ |\chi_{-n}(D\otimes_B \Xb)|}{\beta_n^A(k)} = 0.$$
     \item
     $$ \lim_{n \rt \infty} \frac{ \ell(H^{-n}(D\otimes_B \Xb))}{\beta_n^A(k)} = 0.$$
     \item
     Set $\ov{\Xb} = \Xb\otimes_B k$. Then $$\lim_{n \rt \infty} \frac{\chi_{-n}(\ov{\Xb})}{\beta^A_n(k)} = \frac{\alpha}{\alpha + 1}.$$
   \end{enumerate}
\end{lemma}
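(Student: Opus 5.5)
Throughout I work with $B=A/(\bx)$, which is Artinian because $\bx$ is a maximal $A$-superficial (hence $A$-regular) sequence in the \CM\ ring $A$. The key observation is that
$$D\otimes_B \Xb = D\otimes_B (\Pb\otimes_A B) \cong \Pb\otimes_A D.$$
Hence $H^{-n}(D\otimes_B\Xb)\cong \Tor^A_n(k,D)$, a $k$-vector space of length $\beta^A_n(D)$, and it vanishes for $n<0$. So (b) and (a) reduce to controlling the curvature of $D=M/\bx M$ over $A$. Part (c) is a direct computation with the Betti numbers of $k$.

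\textbf{Part (b).} Since $\bx$ is $M$-regular, tensoring a minimal $A$-resolution of $M$ with the Koszul complex on $\bx$ gives a (not necessarily minimal) free resolution of $D$. This yields
$$\beta^A_n(D)\le \sum_{i=0}^{d}\binom{d}{i}\beta^A_{n-i}(M).$$
Therefore $\limsup_n \sqrt[n]{\beta^A_n(D)}\le \curv_A(M)<\alpha$. Now apply Proposition \ref{calc} with $\theta_n=\beta^A_n(k)$, for which $\theta_{n+1}/\theta_n\to\alpha>1$ by the standing hypothesis, and $r_n=\beta^A_n(D)$. This gives (b).

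\textbf{Part (a).} The complex is bounded above with $H^{i}=0$ for $i>0$. Hence $\chi_{-n}(D\otimes_B\Xb)=\sum_{j=0}^{n}(-1)^j\beta^A_{n-j}(D)$, and so
$$|\chi_{-n}(D\otimes_B\Xb)|\le s_n:=\sum_{i=0}^n\beta^A_i(D).$$
Let $c=\limsup\sqrt[n]{\beta^A_n(D)}<\alpha$ and fix $\epsilon>0$ with $\max(c,1)+\epsilon<\alpha$. Then $\beta^A_i(D)\le K(\max(c,1)+\epsilon)^i$ for all $i$, with a suitable constant $K$. It follows that $s_n\le K(n+1)(\max(c,1)+\epsilon)^n$, and so $\limsup\sqrt[n]{s_n}<\alpha$. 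A second application of Proposition \ref{calc}, with $r_n=s_n$, gives (a).

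\textbf{Part (c).} Since $\Pb$ is minimal, $\ov{\Xb}=\Pb\otimes_A k$ has zero differentials and $H^{-n}(\ov{\Xb})=k^{\beta_n}$, where $\beta_n=\beta^A_n(k)$. Thus
$$\frac{\chi_{-n}(\ov{\Xb})}{\beta_n}=\sum_{j=0}^{n}(-1)^j\frac{\beta_{n-j}}{\beta_n}.$$
For each fixed $j$ the ratio $\beta_{n-j}/\beta_n$ tends to $\alpha^{-j}$. If the limit may be interchanged with the sum, the answer is $\sum_{j\ge0}(-\alpha^{-1})^j=\alpha/(\alpha+1)$.

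The main obstacle is justifying this interchange, which is a dominated-convergence argument. Choose $1<\gamma<\alpha$ and $m_0$ such that $\beta_{m+1}/\beta_m\ge\gamma$ for all $m\ge m_0$. Then $\beta_{n-j}/\beta_n\le\gamma^{-j}$ whenever $n-j\ge m_0$, and $\sum_j \gamma^{-j}$ converges, so dominated convergence handles these terms. The remaining terms have $n-j<m_0$; there are at most $m_0$ of them, each bounded by $\max_{i<m_0}\beta_i/\beta_n$. This bound tends to $0$ because $\beta_n\to\infty$ (as $\alpha>1$). Combining the two estimates gives the limit $\alpha/(\alpha+1)$.
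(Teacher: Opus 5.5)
Your proof is correct. For (a) and (b) it takes a genuinely different and more direct route than the paper.

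\textbf{How the paper argues (a) and (b).} The paper never computes the homology of $D\otimes_B\Xb$. It treats $\Xb$ as an abstract minimal complex of free $B$-modules with $H^i(\Xb)=0$ for $i\le m_0$ and $\chi(\Xb)=0$. It then tensors the sequences $0\to\Omega^1(D)\to B^{\mu(D)}\to D\to 0$ with $\Xb$ to shift along $B$-syzygies, obtaining $\chi_{-n}(D\otimes\Xb)=\chi_{m_0}(\Omega^{n+m_0}(D)\otimes\Xb)$, and similarly for $H^{-n}$. Both quantities are bounded by a constant times $\ell(\Omega^{n+m_0}(D))$, which is controlled through $\beta^B_i(D)=\beta^A_i(M)$. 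Proposition \ref{calc} then finishes the argument.

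\textbf{How you argue (a) and (b).} You use $D\otimes_B\Xb\cong\Pb\otimes_A D$, so that $H^{-n}(D\otimes_B\Xb)=\Tor^A_n(k,D)$ exactly. You then get $\curv_A(D)\le\curv_A(M)$ from the Koszul-complex resolution of $M/\bx M$. For (a) you simply dominate the alternating sum by partial sums of Betti numbers, so you need neither $\chi(\Xb)=0$ nor syzygies over $B$.

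\textbf{Comparison.} Your argument is shorter and identifies the homology exactly. The paper's argument uses only formal properties of $\Xb$: it is bounded above, consists of free modules of ranks $\beta^A_n(k)$, and has finitely many nonzero homologies with vanishing Euler characteristic. So it does not depend on $\Xb$ arising as $\Pb\otimes_A B$, which fits the derived-category framework the paper sets up.

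\textbf{Part (c).} Both arguments reduce to the same alternating sum $\sum_j(-1)^j\beta_{n-j}/\beta_n$. The paper evaluates it with the explicit $\epsilon$-estimates of Lemma \ref{lim} in the appendix. Your dominated-convergence argument with the geometric majorant $\gamma^{-j}$, together with the finitely many initial terms being $O(1/\beta_n)$, is a cleaner packaging of the same idea.
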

\begin{proof}
  Recall $\Xb$ is a minimal complex of free $B$-modules. We have $\beta_n = \beta_n^A(k) = \rank_B{\Xb}^{-n}$. 
We assume $H^i(\Xb) = 0$ for $i \leq m_0$. Let $\Omega^i(D)$ denote the $i^{th}$-syzygy of $D$ as a $B$-module.
 Consider
\[
0 \rt \Omega^1(D) \rt B^{\mu(D)} \rt D \rt 0.
\]
Taking $-\otimes \Xb$ we get
\begin{equation*}
  0 \rt \Omega^1(D)\otimes\Xb \rt {\Xb}^{\mu(D)} \rt D\otimes \Xb \rt 0. \tag{$\dagger$}
\end{equation*}
We have $\chi(\Xb) = \sum_{i\in \Z} (-1)^i\ell(H^i(X)) = 0$. Let $m \leq m_0 $ and assume $m + c = m_0$. By ($\dagger$) we have
\[
\chi_m(D\otimes \Xb) = \chi_{m+1}(\Omega^1(D)\otimes \Xb) = \chi_{m+2}(\Omega^2(D)\otimes \Xb) = \cdots = \chi_{m_0}(\Omega^c(D)\otimes \Xb).
\]

(a)  We have $\ell(\Omega^c(D)) \leq \beta^B_{-m + m_0}(D)\ell(B)$. We also have $\beta^B_i(D) = \beta^A_i(M)$ for all $i \geq 0$.
By \ref{calc} it follows that
$$ \frac{|\chi_m(D\otimes \Xb)| }{ \beta_{-m}(k)} \leq \frac{\beta_{-m + m_0}^B(D)\ell(B)}{\beta_{-m}(k)} (\sum_{n \leq -m_0}\beta_n(k)) \rt 0 \quad \text{as} \ -m \rt \infty.$$

(b) We have $\ell(\Omega^c(D)) \leq \beta_{-m + m_0}^B(D)\ell(B)$.
By \ref{calc} it follows that
$$ \frac{\ell(H^m(D\otimes \Xb)) }{ \beta_{-m}(k)} \leq \frac{\beta_{-m + m_0}^B(D)\ell(B)}{\beta_{-m}(k)}\beta_{-m_0}(k) \rt 0 \quad \text{as} \ -m \rt \infty.$$

(c) We note that $\ov{\Xb}$ is a complex with trivial differentials. We have $H_{-m}(\ov{\Xb}) = k^{\beta_m}$ where $\beta_{n} = \beta_n^A(k)$.
It follows that
$$\frac{\chi_{-n}(\ov{\Xb})}{\beta_n} = 1 - \sum_{j \geq 0}(-1)^j\frac{\beta_{n-1-j}}{\beta_n}.$$
As $\lim_{n \rt \infty}\beta_{n+1}/\beta_n = \alpha >1$, it follows from Lemma \ref{lim} that
 $$\lim_{n \rt \infty} \frac{\chi_{-n}(\ov{\Xb})}{\beta_n} = 1 - \frac{1}{\alpha+1} =  \frac{\alpha}{\alpha + 1}.$$
\end{proof}
\section{proof of Theorem \ref{main} when $\dim A > 0$}
We  give
\begin{proof}[Proof of Theorem \ref{main} when $\dim A > 0$]
If the residue field of $A$ is finite then set $A^\prime = A[X]_{\m A[X]}$. The residue field $l$ of $A^\prime$ is $k(X)$ which is infinite. We have the maximal ideal of $A^\prime $ is $\n = \m A^\prime$. It is readily verified that $\beta^A_n(k) = \beta^{A^\prime}_n(l)$. So $\curv_A(k)  = \curv_{A^\prime}(l) = \alpha$. Furthermore the limit $\beta_{n+1}^{A^\prime}(l)/\beta_{n}^{A^\prime}(l)$ exists. Set $N = M\otimes_A A^\prime$. Then $N$ is an MCM $A^\prime$-module. We have $e(N)= e(M)$ and $\mu(N) = \mu(M)$. Furthermore if $\mathbb{F}$ is a minimal free resolution of $M$ as a $A$-module then $\mathbb{F}\otimes_A A^\prime$ is a minimal free resolution of $N$ as an $A^\prime$-module. It follows that $\beta_n^{A^\prime}(N) = \beta_n^A(M)$ for all $n \geq 0$.
So $\curv_{A^\prime}(N) = \curv_A(M) < \alpha$. With these considerations we may assume that the residue field of $A$ is infinite. We do the construction as in section three.

 Throughout $-\otimes - = - \otimes_B -$. We have an exact sequence
$$ 0 \rt \m D \rt D \rt k^{\mu(D)} \rt 0.$$
Let $\Xb$ be as in section three. Taking $-\otimes \Xb$ we get
$$ 0 \rt \Xb \otimes \m D \rt D \otimes \Xb \rt \ov{\Xb}^{\mu(D)} \rt 0$$
where $\ov{\Xb} = \Xb\otimes k$. Set $\beta_n = \beta_n^A(k)$.

Claim-1:
$$ \lim_{n \rt \infty}\frac{\ell(\chi_{-n}(\m D\otimes \Xb))}{ \beta_n} = \mu(D) \frac{\alpha^2}{\alpha + 1}.$$

Proof of Claim-1:  Assume $H^i(\Xb) = 0$ for $i \leq m_0$.
Let $-n \leq m_0$. Then we have exact sequences
\begin{equation}\label{first}
 0 \rt C_n \rt H^{-n}(\m D \otimes X) \rt H^{-n}(D\otimes \Xb) \rt H^{-n}(\ov{\Xb})^{\mu(D)} \rt \cdots.
\end{equation}
and
\begin{equation}\label{second}
  \rt H^{-n -1}(D\otimes \Xb) \rt  H^{-n-1}(\ov{\Xb})^{\mu(D)} \rt C_n \rt 0.
\end{equation}
We have by  \ref{rachel}(b) that
\begin{align*}
   \lim_{n \rt \infty}\frac{\ell(H^{-n -1}(D\otimes \Xb)) }{\beta_n} &= \lim_{n \rt \infty}\frac{\ell(H^{-n -1}(D\otimes \Xb)) }{\beta_{n+1}}  \frac{\beta_{n+1}}{\beta_n} \\
  &=  0 . (\alpha) = 0.
\end{align*}
So by exact sequence (\ref{second}) we get
\begin{equation}\label{third}
  \lim_{n \rt \infty} \frac{\ell(C_n)}{\beta_n} = \mu(D) \lim_{n \rt \infty} \frac{\beta_{n+1}}{\beta_n} = \mu(D) \alpha.
\end{equation}
By the exact sequence (\ref{first}) we obtain
$$ \frac{\ell(C_n)}{\beta_n} + \frac{\ell(\chi_{-n}(D\otimes \Xb))}{ \beta_n}  =  \frac{\ell(\chi_{-n}(\m D\otimes \Xb))}{ \beta_n}  + \mu(D)\frac{\chi_{-n}(\ov{\Xb})}{\beta_n}.  $$
By \ref{rachel}(a),(c) and (\ref{third}) we obtain
$$\alpha \mu(D) = \lim_{n \rt \infty}\frac{\ell(\chi_{-n}(\m D\otimes \Xb))}{ \beta_n}  +  \mu(D) \frac{\alpha}{\alpha + 1}. $$
So we get Claim-1.

Note $\m D \neq 0$. Otherwise $D = k^{\mu(D)}$ and we get $\curv_B(D) = \curv_B(k)$. We have $\curv_B(D) = \curv_A(M)$. By \cite[2.3]{P-ext} we also get $\curv_B(k) = \curv_A(k)$. So we obtain $\curv_A(M) = \curv_A(k)$ which is a contradiction.  So $\m D \neq 0$.

Assume for some $r \geq 1$ we have $\m^{r + 1} D = 0$ and $\m^{r} D \neq 0$.

Case 1: $r \geq 2$. \\
For $j = 1, \ldots, r-1$ we have
exact sequence $ 0 \rt \m^{j+1} D \rt \m^j D \rt \m^j D /\m^{j+1}D \rt 0$. So we obtain exact sequence
\begin{equation}\label{baton}
0 \rt T_n^{(j)} \rt H^{-n}(\m^{j+1} D \otimes X) \rt H^{-n}(\m^j D\otimes \Xb) \rt H^{-n}(\ov{\Xb})^{\mu(\m^j D)} \rt \cdots.
\end{equation}
Note $\m^r M = k^{\mu(\m^r M)}$.
We obtain for $j = 1, \ldots, r-1$
$$ \frac{\ell(T_n^{(j)})}{\beta_n} + \frac{\ell(\chi_{-n}(\m^j D\otimes \Xb))}{ \beta_n}  =  \frac{\ell(\chi_{-n}(\m^{j+1} D\otimes \Xb))}{ \beta_n}  + \mu(\m^j D)\frac{\chi_{-n}(\ov{\Xb})}{\beta_n}. $$
Adding terms we obtain
$$ \frac{\sum_{j = 1}^{r-1}\ell(T_n^{(j)})}{\beta_n}   + \frac{\ell(\chi_{-n}(\m D\otimes \Xb))}{ \beta_n}   = (\ell(D) - \mu(D))\frac{\chi_{-n}(\ov{\Xb})}{\beta_n}. $$
Set $$s_n =  \frac{\sum_{j = 1}^{r-1}\ell(T_n^{(j)})}{\beta_n}.$$
Note $s_n \geq 0$. By \ref{rachel}(c) and Claim-1 we obtain
$$\lim_{n \rt \infty}s_n + \mu(D)\frac{\alpha^2}{\alpha +1} = (\ell(D) - \mu(D))\frac{\alpha}{\alpha +1}.$$
So we obtain
$$\lim_{n \rt \infty}s_n  = \ell(D) \frac{\alpha}{\alpha + 1}  - \mu(D)\alpha. $$
But $s_n \geq 0$. So $\lim_{n\rt \infty}s_n  \geq 0$. Thus
$$\ell(D)  \geq \mu(D)(\alpha + 1).$$

Case 2: $r = 1$. \\
Then $\m D = k^{\mu(\m D)}$. So we have $\chi_{-n}(\Xb \otimes \m D) = \mu(\m D) \chi_{-n}(\ov{\Xb})$. Therefore by \ref{rachel}(c) we obtain
$$\lim_{n \rt \infty}  \frac{\ell(\chi_{-n}(\m D\otimes \Xb))}{ \beta_n}   =  \mu(\m D) \frac{\alpha}{\alpha + 1}.$$
By Claim-1 it follows that $\mu(\m D)/\mu(D) = \alpha$. As $\ell(D) = \mu(D) + \mu(\m D)$ it follows that $\ell(D) = \mu(D)(\alpha + 1).$

Thus in both cases we have $\ell(D)  \geq \mu(D)(\alpha + 1).$
The result follows as $e(M) = \ell(D)$ and $\mu(M) = \mu(D)$.
\end{proof}
\section{Proof of Theorem \ref{main} when $\dim A = 0$}
In this section we give
\begin{proof}[Proof of Theorem \ref{main} when $\dim A = 0$]
Consider the ring $R = A[X]_{(\m, X)}$ which is a flat local extension of $A$ with maximal ideal $\n = (\m, X)R$ and residue field $k$.  Set $N = M\otimes_A R$. We note that $R$ is \CM \ and $N$ is an MCM $R$-module. As $R/(X) = A$ and $A$ is not a complete intersection it follows that $R$ is also not a complete intersection. We have $\gr R = (\gr A)[X^*]$ and $\gr N = (\gr M)[X^*]$. We have $X^*$ is $\gr R \oplus \gr N$-regular. It follows that $e(N) = e(N/XN) = e(M)$ and $\mu(N) = \mu(M)$. If $\mathbb{F}$ is a minimal resolution of $M$ over $A$ then
$\mathbb{F}\otimes_A R$ is a minimal resolution of $N$ as an $R$-module. So $\curv_R(N) = \curv_A(M)$.

Set $\curv_A(k) = \alpha$. By our hypothesis we have $\lim_{n \rt \infty} \beta_{n+1}^A(k)/\beta_n^A(k) = \alpha$.

Set $\theta_n = \beta_n^R(k)$ and $\alpha_n = \beta_n^A(k)$. As $X \notin \n^2$ by \cite[3.3.5]{A},   we get that $\theta_n = \alpha_n + \alpha_{n-1}$ for $n \geq 1$. So we obtain
$$\frac{\theta_{n+1}}{\theta_n} = \frac{\alpha_{n + 1} + \alpha_{n}}{\alpha_n + \alpha_{n-1}} =  \frac{\frac{\alpha_{n+1}}{\alpha_n} +1 }{ 1 + \frac{\alpha_{n-1}}{\alpha_n} }.$$
Taking limits we obtain
$$ \lim_{n \rt \infty} \frac{\theta_{n+1}}{\theta_n} = \frac{\alpha +1}{ 1 + \frac{1}{\alpha}} = \alpha.$$
The \CM \ ring $R$ and the MCM module $N$ satisfies the conditions of Theorem \ref{main} and $\dim R  = 1$. Therefore
$e(N) \geq \mu(N)(1+\alpha)$. The result follows as $e(M) = e(N)$ and $\mu(M) = \mu(N)$.
\end{proof}
\section{Proof of Theorem \ref{main-ci}}
In this section we give proof of Theorem \ref{main-ci}. Throughout this section $A$ is a complete intersection of codimension $c \geq 1$.
We need a few preliminaries.
\s\label{r-ci}(see \cite[9.2.1]{A})
Let $E$ be an $A$-module with infinite projective dimension. Then \\ $\lim_{ n\rt \infty} \beta_n(M)/n^{\cx(E) - 1}$ is positive. Furthermore $\lim_{n \rt \infty} \beta_{n+1}(E)/\beta_n(E) = 1$.
\begin{proposition}\label{ovi}
(with hypotheses as above) Let $E$ be an $A$=module. Then \\$\lim_{n \rt \infty} \beta_n(E)/\beta_n(k)$ exists. Furthermore
$$\lim_{n \rt \infty} \beta_n(E)/\beta_n(k) = \begin{cases}
   & 0 \ \mbox{if } \ \cx(E) < \cx(k) \\
   &>0 \ \mbox{otherwise}.
\end{cases}$$
\end{proposition}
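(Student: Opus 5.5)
We prove Proposition \ref{ovi} by comparing polynomial growth rates using \ref{r-ci}.

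First we dispose of modules of finite projective dimension. If $\projdim_A E < \infty$, then $\beta_n(E) = 0$ for $n \gg 0$, so $\beta_n(E)/\beta_n(k) = 0$ for $n \gg 0$. In this case $\cx(E) = 0 < c = \cx(k)$, since $A$ is a complete intersection of codimension $c \geq 1$. The limit therefore exists and equals $0$, as claimed.

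Now assume $\projdim_A E = \infty$, and set $r = \cx(E)$. Then $1 \leq r \leq c$. By \ref{r-ci} applied to $E$, the limit
$$a = \lim_{n \rt \infty} \beta_n(E)/n^{r-1}$$
exists and is positive. The residue field $k$ also has infinite projective dimension, because $A$ is not regular ($c \geq 1$). Since $\cx(k) = c$, \ref{r-ci} gives that
$$b = \lim_{n \rt \infty} \beta_n(k)/n^{c-1}$$
exists and is positive. For $n \gg 0$ we can then write
$$\frac{\beta_n(E)}{\beta_n(k)} = \frac{\beta_n(E)/n^{r-1}}{\beta_n(k)/n^{c-1}} \cdot n^{r-c}.$$
The first factor tends to $a/b > 0$. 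The factor $n^{r-c}$ tends to $0$ if $r < c$ and equals $1$ if $r = c$. Hence the limit of $\beta_n(E)/\beta_n(k)$ exists in both cases: it is $0$ when $\cx(E) < \cx(k)$ and $a/b > 0$ when $\cx(E) = \cx(k)$. Since $\cx(E) \leq \cx(k)$ always holds over a complete intersection, these two cases are exhaustive.

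The only real input is the existence and positivity of $\lim \beta_n(E)/n^{\cx(E)-1}$ in \ref{r-ci}. This rests on the rationality of the Poincaré series over a complete intersection, whose denominator is a power of $(1-t^2)$: the Betti numbers are then eventually given by two polynomials, one for even $n$ and one for odd $n$, of the same degree $\cx(E)-1$ and with the same leading coefficient. Once \ref{r-ci} is granted, the rest of the argument is a routine computation.
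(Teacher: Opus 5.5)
Your proof is correct and is essentially the paper's argument: the paper also dismisses the case of finite projective dimension, then writes $\beta_n(E)/\beta_n(k)=\frac{\beta_n(E)}{n^{r-1}}\cdot\frac{n^{c-1}}{\beta_n(k)}\cdot n^{r-c}$ and concludes from \ref{r-ci}. Your version is slightly more explicit than the paper's in two places: you note that $\projdim_A k=\infty$ (since $c\geq 1$), so \ref{r-ci} applies to $k$, and you note that $\cx(E)\leq\cx(k)$, so the two cases are exhaustive.
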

\begin{proof}
We have nothing to show if $E$ has finite projective dimension. So assume $\projdim E = \infty$. Set $\cx E = r$. Note $\cx k = c = \codim A$.
We have
$$\lim_{n \rt \infty} \beta_n(E)/\beta_n(k)=
\frac{\beta_n(E)}{n^{r-1}} \frac{n^{c-1}}{\beta_n(k)}  \frac{n^{r-1}}{n^{c-1}}.$$
The result follows from \ref{r-ci}.
\end{proof}
We now give
\begin{proof}[Proof of Theorem \ref{main-ci}]
If $d = \dim A > 0$ then by an argument similar to proof of Theorem \ref{main} we may assume that the residue field of $A$ is infinite. Let $\bx = x_1,\ldots, x_d$ be a $M \oplus A$-superficial sequence. Set $B = A/(\bx)$ and $D = M/\bx M$. We have $B$ is a complete intersection of codimension $c$. We also have $\cx_B D = \cx_A M < c$. Furthermore $\ell(D) = e(M)$ and $\mu(D) = \mu(M)$.

We have nothing to show if $D$ is a free $B$-module. So assume $D$ is not free. Note $\m D \neq 0$ for otherwise $D \cong k^{\mu(D)}$ and we get $\cx_B D = c$, a contradiction.

Claim-1. $\lim_{n \rt \infty} \beta_n(\m D)/\beta_n(k) = \mu(D)$.

We have an exact sequence  $0 \rt \m D \rt D \rt k^{\mu(D)} \rt 0$.
So we have  $$\mu(D)\beta_n(k) \leq \beta_n(D) + \beta_{n-1}(\m D). $$ By \ref{ovi} it follows that $$\mu(D) \leq \lim_{n \rt \infty} \beta_{n-1}(\m D) /\beta_n(k).$$  As $\lim_{n \rt \infty} \beta_n(\m D)/\beta_{n-1}(\m D) = 1$ it follows that  $\mu(D) \leq \lim_{n \rt \infty} \beta_{n}(\m D) /\beta_n(k)$.  The exact sequence also implies
$$\beta_n(\m D) \leq \mu(D) \beta_{n+1}(k) + \beta_n(D).$$ By \ref{ovi} and \ref{r-ci} we get $\lim_{n \rt \infty} \beta_{n}(\m D) /\beta_n(k)  \leq \mu(D)$. Claim-1 follows.

Assume $\m^r D \neq 0$ and $\m^{r + 1}D = 0$.

Case-1 $r \geq 2$. \\
For $i = 1, 2, \ldots, r-1$ we have exact sequences
$$ 0 \rt \m^{i+1} D \rt \m^i D \rt k^{\mu(\m^i D)} \rt 0.$$
So we get
$$\lim_{n \rt \infty} \frac{\beta_n(\m^i D)}{\beta_n(k)} \leq \lim_{n \rt \infty} \frac{\beta_n(\m^i D)}{\beta_n(k)} + \mu(\m^i D). $$
Adding the terms and as $\m^r D = k^{\mu(\m^r D)}$ we get
$$\lim_{n \rt \infty} \frac{\beta_n(\m D)}{\beta_n(k)} \leq  \sum_{i = 1}^{r} \mu(\m^i(D))  = \ell(D) - \mu(D).$$
By Claim-1 it follows that $\ell(D) - \mu(D) \geq \mu(D)$. So $\ell(D) \geq 2\mu(D)$.

Case-2. $r = 1$.

Then $\m D = k^{\mu (D)}$. So we get $\lim_{n \rt \infty} \beta_{n}(\m D) /\beta_n(k)  =  \mu( \m D)$. By Claim-1 we get $\mu(D) = \mu(\m D)$. It follows that
$\ell(D) = \mu(D) + \mu(\m D) = 2 \mu(D)$.

Thus in any case we have $\ell(D) \geq 2\mu(D)$. The result follows.
\end{proof}
\section{Examples}
In this section we give  examples which shows that our results are sharp.
We first give an example which shows that the bound in Theorem \ref{main} is sharp.
\begin{example}
\label{bella} Let $(R,\m)$ be an one-dimensional \CM \ local ring of minimal multiplicity and infinite residue field. So $e(R) = h + 1$ where $h = \embdim(R) - 1$. We have $\gr R$ is \CM \
 (see \cite[Theorem 2]{S-min}) and
$h_R(z) = 1 + hz$. Let $x\in \m \setminus \m^2$ be such that $x^*$ is $\gr R$-regular. Then the Hilbert series of $R/(x)$ is $1 + hz$. So  $\type(R) = h$. Thus if $ h \geq 2$ then $R$ is not Gorenstein. In particular it is not a complete intersection.

Set $A = R/(x^2)$. Then $\curv_A(k) = h$ and $\lim_{n \rt\infty} \beta_{n+1}(k)/\beta_n(k) = h$, see \ref{min-mult}.
Consider $M = R/(x)$. Then $M$ is an $A$-module. Note we have an exact sequence of $A$-modules
$$0 \rt M \rt A \rt M \rt 0.$$
Thus the first syzygy of $M$ is $M$ itself. So $M$ is a periodic $A$-module. Therefore $\curv(M) = 1 < h$. We have $\ell(M) = \ell(R/(x)) = h + 1$. Also $\mu(M) = 1$. Thus $M$ attains the bound of Theorem \ref{main}.
\end{example}
\begin{remark}
If $\curv(k)$ is irrational then clearly $e(M) \neq \mu(M)(1+\curv(k))$. So the bound is not attained in this case.
\end{remark}

\s The following result is well-known. We give a proof as we don't have a reference.
\begin{lemma}\label{min-mult}
Let $(S,\n)$, $(T,\m)$ be rings with minimal multiplicity of codimension $h \geq 2$ and dimensions $0, 1$ respectively. Let $f \in \m^2$ be a non-zero divisor on $T$ and let $A = T/(f)$.
Then
\begin{enumerate}[\rm (1)]
  \item $\beta_n^S(k) = h^n$ for all $n \geq 0$.
  \item $P_S(z) = 1/(1-hz)$
  \item $P_T(z) = (1+z)P_S(z)$.
  \item $P_A(z)  =  P_T(z)/(1-z^2) = \frac{1}{(1-hz)(1-z)}$.
  \item $\lim_{n \rt \infty} \beta_{n+1}^A(k)/\beta_n^A(k)  = h$.
\end{enumerate}
\end{lemma}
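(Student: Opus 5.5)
The proof will go in order (1)–(5), each step building on the previous one.

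For (1) and (2), I would use that an artinian local ring $(S,\n)$ of minimal multiplicity and codimension $h$ satisfies $\n^2 = 0$ and $\mu(\n) = h$. Hence $\n \cong k^h$. From the exact sequence $0 \rt \n \rt S \rt k \rt 0$, the first syzygy of $k$ is $k^h$. Since $\n^2=0$, the minimal resolution splits at every stage in this way. Inductively this gives $\beta_n^S(k) = h\,\beta_{n-1}^S(k)$, so $\beta_n^S(k)=h^n$ and $P_S(z) = 1/(1-hz)$.

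For (3), I would reduce $T$ modulo a regular element. Since the residue field may be assumed infinite (by the $A[X]_{\m A[X]}$ trick used earlier, which preserves Betti numbers of $k$), I choose $x \in \m\setminus\m^2$ superficial, so that $x^*$ is $\gr T$-regular, as in Example \ref{bella}. Then $S' = T/(x)$ is artinian with Hilbert series $1+hz$, so it has minimal multiplicity of codimension $h$. Because $x \notin \m^2$ is $T$-regular, \cite[3.3.5]{A} gives $P_T(z) = (1+z)P_{S'}(z)$, and (2) applied to $S'$ yields (3). Here the ring $S$ in the statement should be read as any such artinian ring, in particular $S'$.

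For (4), I would use that $f \in \m^2$ is $T$-regular, so $A = T/(f)$ is a hypersurface section in which $f$ lies in the square of the maximal ideal. The standard result (Tate/Gulliksen, \cite[3.4.1]{A}) then gives $P_A(z) = P_T(z)/(1-z^2)$. Combining with (3):
\[
P_A(z) = \frac{(1+z)}{(1-hz)(1-z)(1+z)} = \frac{1}{(1-hz)(1-z)}.
\]
I expect this step to be the main obstacle: I must cite correctly that $f \in \m^2$ is the condition needed, which in general requires $f \in \m\,\mathrm{ann}$-type hypotheses. The cleanest citation is Shamash's theorem for $f \in \m^2$ regular, or \cite[Prop.~3.3.5(2)]{A}. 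If that result only gives a coefficientwise inequality, I would argue equality via the Tate construction (adjoining a divided power variable of degree $2$ to kill the cycle of $f$).

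For (5), expanding $P_A(z)$ gives $\beta_n^A(k) = \sum_{i=0}^n h^i = (h^{n+1}-1)/(h-1)$. Since $h \geq 2$, the ratio $\beta_{n+1}^A(k)/\beta_n^A(k) = (h^{n+2}-1)/(h^{n+1}-1) \rt h$.
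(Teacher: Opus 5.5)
Your proposal is correct and matches the paper's proof step for step: (1)–(2) from $\n \cong k^h$; (3) by passing to an infinite residue field, reducing modulo a superficial element to an artinian ring of minimal multiplicity, and applying the $x \notin \m^2$ case of \cite[3.3.5]{A}; (4) by the $f \in \m^2$ case of the same proposition; and (5) by expanding the series. Your worry in (4) is unnecessary: for the module $k$ the hypothesis $f \in \m \cdot \operatorname{ann}_T(k)$ is exactly $f \in \m^2$, so the cited result (Shamash's theorem) already gives the equality $P_A(z) = P_T(z)/(1-z^2)$, and the Tate-construction fallback is not needed.
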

\begin{proof}
(1) Note $\n = k^h$. It follows that $\beta_1(k) = h$ and $\beta_{n + 1} = h \beta_n$ for $n \geq 1$. The result follows.

(2) This follows from (1).

(3) After possibly going to a flat extension (to get infinite residue field) we may assume that there exists $x \in \m$ which is $T$-superficial. So $T/(x)$ has minimal multiplicity. The result follows from \cite[3.3.5]{A}.

(4) This follows from \cite[3.3.5]{A}.

(5) We have by (2), (3)
\[
\sum_{n \geq 0} \beta^A_n(k)z^n  = \frac{P_S(z)}{(1-z)}.
\]
Set $\beta_n = \beta^A_n(k)$. Then by above equality we obtain
\[
\beta_n = 1 + h + \cdots + h^n = \frac{h^{n+1} -1}{h -1}.
\]
So we obtain
\[
\frac{\beta_{n+1}}{\beta_n} = \frac{h^{n+1} -1}{h^{n} -1} = \frac{ h - \frac{1}{h^n} }{1 - \frac{1}{h^n}}.
\]
Taking limits as $n \rt \infty$ we obtain the result.
\end{proof}

Next we give an example which shows that the bound in Theorem \ref{main-ci} is sharp.
\begin{example}
\label{ex-ci} Let $R = k[[X, Y]]/(X^2)$, $A = R/(Y^2)$ and $M = R/(X)$. As before $M$ is a periodic $A$-module. So $\cx_A M = 1$. We have $\codim A = 2$ and so $\cx_A k = 2$. We also have $\ell(M) = 2$ and $\mu(M) = 1$. So $M$ attains the bound of Theorem \ref{main-ci}.
\end{example}
\section{Appendix}
In the appendix we calculate a limit which is crucial to us.
\s \label{lim-setup} Let $\{ \theta_n \}_{n \geq c} $ be a sequence of positive real numbers such that $$\lim_{n \rt \infty} \theta_{n + 1}/\theta_n = \xi > 1.$$
 Set $\theta _n = 0$ for $n < c$. Also set
$$r_n  = \sum_{j \geq 0}(-1)^j\frac{\theta_{n-1-j}}{\theta_n}. $$
We show
\begin{lemma}\label{lim}
  (with hypotheses as in \ref{lim-setup}). We have
  $$ \lim_{n \rt \infty} r_n  = \frac{1}{\xi + 1}. $$
\end{lemma}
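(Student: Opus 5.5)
The plan is to rewrite $r_n$ as a finite alternating sum of ratios and compare it with the geometric series $\sum_{j\ge 0}(-1)^j\xi^{-(j+1)} = \frac{1}{\xi+1}$.

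Since $\theta_m = 0$ for $m < c$, the sum defining $r_n$ is finite. We have
\[
r_n = \sum_{j=0}^{n-1-c}(-1)^j\frac{\theta_{n-1-j}}{\theta_n}.
\]
For fixed $j$, write $\theta_{n-1-j}/\theta_n$ as a product of $j+1$ consecutive ratios $\theta_{m}/\theta_{m+1}$. Each such ratio tends to $1/\xi$, so this term tends to $\xi^{-(j+1)}$.

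The task is then to interchange the limit with the sum, via a uniform tail bound of dominated-convergence type. Fix $\epsilon>0$ with $\eta := 1/\xi + \epsilon < 1$. Choose $N_0$ such that $\theta_m/\theta_{m+1} \le \eta$ for all $m \ge N_0$. For $m$ in the finite range $c \le m < N_0$, the ratios $\theta_m/\theta_{N_0}$ are bounded by some constant $K$. Hence for $n > N_0$ and every $j$ with $n-1-j \ge c$:
\begin{itemize}
\item if $n-1-j \ge N_0$, then $\theta_{n-1-j}/\theta_n \le \eta^{j+1}$;
\item otherwise, $\theta_{n-1-j}/\theta_n \le K\,\theta_{N_0}/\theta_n \le K\eta^{\,n-N_0}$.
\end{itemize}
The second group has at most $N_0 - c$ terms, so its total contribution is at most $(N_0-c)K\eta^{n-N_0}$, which tends to $0$.

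For the first group, split at a fixed cutoff $J$. The terms with $j \le J$ converge to $\sum_{j\le J}(-1)^j\xi^{-(j+1)}$. The terms with $J < j$ are bounded in absolute value by $\sum_{j>J}\eta^{j+1}$, which is small once $J$ is large. Letting $n\to\infty$ and then $J\to\infty$ gives
\[
\lim_{n\to\infty} r_n = \sum_{j\ge0}(-1)^j\xi^{-(j+1)} = \frac{1/\xi}{1+1/\xi} = \frac{1}{\xi+1}.
\]

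An alternative route is the identity $r_n = \frac{\theta_{n-1}}{\theta_n}(1 - r_{n-1})$, which holds for $n > c$. If the limit $L$ is known to exist, this gives $L = (1-L)/\xi$, so $L = 1/(\xi+1)$. However, the existence of the limit is exactly where the difficulty lies. A contraction argument also works here: the map $L \mapsto (1-L)\theta_{n-1}/\theta_n$ eventually has Lipschitz constant at most $\eta < 1$, and this yields convergence directly. I would present the tail-bound argument above as the primary proof.

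The main obstacle is controlling the tail uniformly, in particular the finitely many initial terms where the ratio bound fails. The estimate via $\theta_{N_0}/\theta_n \le \eta^{n-N_0}$ handles these.
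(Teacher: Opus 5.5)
Your proof is correct and follows essentially the paper's argument: both bound $\theta_{n-1-j}/\theta_n$ geometrically using $\theta_m/\theta_{m+1}\to 1/\xi$, discard the finitely many initial terms where that bound fails (they tend to $0$), and compare $r_n$ with $\sum_{j\ge 0}(-1)^j\xi^{-(j+1)}=1/(\xi+1)$. The only difference is how the limit is extracted: the paper sandwiches $r_n$ between explicit geometric sums in $1/\xi\pm\epsilon$ (even-$j$ terms bounded above, odd-$j$ terms bounded below) and then lets $\epsilon\to 0$, whereas you fix a single dominating ratio $\eta<1$ and interchange limit and sum via termwise convergence plus a tail cutoff $J$.
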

\begin{proof}
We note $\lim_{n \rt \infty} \theta_n = \infty$.
  Choose $\epsilon > 0$ such that $0 < \epsilon < 1/\xi$ and $(1/\xi) + \epsilon < 1$. We note that  $\lim_{n \rt \infty}\theta_n/\theta_{n+1} = 1/\xi$. Choose
  $m_0$ such that
  $$ \frac{1}{\xi} - \epsilon < \frac{\theta_{m-1}}{\theta_m} < \frac{1}{\xi} + \epsilon \quad \text{for all}\  m \geq i_0.$$
  We note for $j \geq 2$  and $m-j \geq i_0$ we have
  $$ \frac{\theta_{m-j}}{\theta_m} = (\frac{\theta_{m-j}}{\theta_{m-j +1}}) (\frac{\theta_{m-j + 1}}{\theta_{m-j+2}} )\cdots ( \frac{\theta_{m-1}}{\theta_m}).$$
  So we have
  \begin{equation*}
    (\frac{1}{\xi} - \epsilon )^j < \frac{\theta_{m-j}}{\theta_m} < (\frac{1}{\xi} + \epsilon )^j. \tag{$\dagger$}
  \end{equation*}
  Let
  $$t_n = \sum_{n-1-j \leq i_0 -1}(-1)^j\frac{\theta_{n-1-j}}{\theta_n}.$$
  Then clearly $\lim_{n \rt \infty} t_n = 0$.
  Set
  $$ \alpha  = (\epsilon + 1/\xi) \quad \text{and} \quad \beta = ((1/\xi) - \epsilon). $$
  We have
  \begin{align*}
    r_n &= t_n + \sum_{n-1-j \geq i_0 }(-1)^j\frac{\theta_{n-1-j}}{\theta_n} \\
     &=  t_n + \sum_{n-1-j \geq i_0, j \ \text{even}}\frac{\theta_{n-1-j}}{\theta_n} \ - \ \sum_{n-1-j \geq i_0, j \ \text{odd} }\frac{\theta_{n-1-j}}{\theta_n} \\
     &\leq t_n + \alpha(1 + \alpha^2 + \cdots + \alpha^{2l}) - \beta^2(1+ \beta^2 + \cdots + \beta^{2s})
  \end{align*}
  where $l, s \rt \infty$ as $n \rt \infty$.
  So we have $r_n \leq t_n + u_n(\epsilon)$ where
  $$u_n(\epsilon) = \alpha \frac{1 - \alpha^{2l +2}}{1 - \alpha^2} - \beta^2 \frac{1-\beta^{2s + 2}}{1 - \beta^2}. $$
  We have that $ 0< \max \{\alpha, \beta \} < 1$. So
  $$u(\epsilon) = \lim_{n \rt \infty} u_n(\epsilon) =  \frac{\alpha}{1 - \alpha^2} - \frac{\beta^2}{1 - \beta^2}.$$
  We have
  $\limsup r_n \leq u(\epsilon)$. Taking $\epsilon \rt 0$ we obtain
  $$ \limsup r_n \leq \frac{1/\xi}{1 - (1/\xi)^2} - \frac{1/\xi^2}{1 -(1/\xi)^2} = \frac{1}{\xi + 1}.$$

We now compute $\liminf r_n$. We note as before
\begin{align*}
  r_n &= t_n + \sum_{n-1-j \geq i_0, j \ \text{even}}\frac{\theta_{n-1-j}}{\theta_n} \ - \ \sum_{n-1-j \geq i_0, j \ \text{odd} }\frac{\theta_{n-1-j}}{\theta_n} \\ \\
   &\geq t_n + \beta(1+ \beta^2 + \cdots + \beta^{2s}) - \alpha^2(1 + \alpha^2 + \cdots + \alpha^{2l})
\end{align*}
where $l, s \rt \infty$ as $n \rt \infty$.
  So we have $r_n \geq t_n + v_n(\epsilon)$ where
   $$v_n(\epsilon) = \beta \frac{1 - \beta^{2s +2}}{1 - \beta^2} - \alpha^2 \frac{1-\alpha^{2s + 2}}{1 - \alpha^2}. $$
  We have that $ 0< \max\{\alpha, \beta \} < 1$. So
   $$v(\epsilon) = \lim_{n \rt \infty} v_n(\epsilon) =  \frac{\beta}{1 - \beta^2} - \frac{\alpha^2}{1 - \alpha^2}.$$
  We have
  $\liminf r_n \geq v(\epsilon)$. Taking $\epsilon \rt 0$ we obtain
  $$ \liminf r_n \geq  \frac{1/\xi}{1 - (1/\xi)^2} - \frac{1/\xi^2}{1 -(1/\xi)^2} = \frac{1}{\xi + 1}.$$
 The result follows.
\end{proof}



\begin{thebibliography} {99}



\bibitem{A}
L.~L.~Avramov,
\emph{Infinite free resolutions},
Six lectures on commutative algebra (Bellaterra, 1996), 1-118, Progr. Math., 166. Birkhäuser, Basel (1998).







\bibitem{P-th}
T.~J.~Puthenpurakal,
\emph{Hilbert-coefficients of a Cohen-Macaulay module},
J. Algebra, 264, (2003),  82--97.

\bibitem{P-ext}
\bysame,
\emph{Obstructions to curvature of modules over Cohen-Macaulay rings},
Preprint, arXiv:2511.16109.

\bibitem{RW}
W.~Rudin,
\emph{Principles of mathematical analysis},
Third edition
International Series in Pure and Applied Mathematics,
McGraw-Hill Book Co., New York-Auckland-D\"{u}sseldorf, 1976.

\bibitem{S-min}
J.~D.~Sally,
\emph{On the associated graded ring of a local Cohen-Macaulay ring},
J. Math. Kyoto Univ., 17, (1977),  19--21.



\end{thebibliography}
\end{document}